\definecolor{MyLinkColor}{rgb}{0,0,0.4}
\newcommand{\R}{{\mathbb R}}
\newcommand{\E}{{\mathcal E}}
\newcommand{\N}{{\mathbb N}}
\newcommand{\cI}{\mathcal{I}}
\newcommand{\cP}{\mathcal{P}}
\newcommand{\ov}{\overline}
\newcommand{\p}{\partial}
\newcommand{\e}{\varepsilon}
\newcommand{\id}{\mathop{\rm id}\nolimits}
\newcommand{\supp}{\mathop{\rm supp}\nolimits}
\newtheorem{thm}{Theorem}[section]
\newtheorem{lemma}[thm]{Lemma}
\newtheorem{cor}[thm]{Corollary}
\theoremstyle{remark} 
\numberwithin{equation}{section}
\title[Global weak solutions for a fourth order thin film system]{Non-negative global weak solutions for a degenerated parabolic system approximating the
two-phase Stokes problem}
\thanks{}
\author[J. Escher]{Joachim Escher}
\address{Institut f{\"u}r Angewandte Mathematik, Leibniz Universit{\"a}t Hannover, Welfengarten~1, 30167 Hannover, Germany.}
\email{escher@ifam.uni-hannover.de}
\author[B.--V. Matioc]{Bogdan--Vasile Matioc}
\address{Institut f\" ur Mathematik, Universit\" at Wien, Nordbergstra{\ss}e 15,
1090 Wien, {\"O}sterreich}
\email{bogdan-vasile.matioc@univie.ac.at}
\subjclass[2010]{35K41, 35K55, 35K65, 35Q35, 76A20}
\keywords{Thin Film; degenerated parabolic system; non-negative global weak solutions}
\begin{document}

\begin{abstract}
We establish the existence of non-negative  global  weak solutions for a
strongly couple degenerated parabolic system which was obtained as an 
approximation of the two-phase Stokes problem driven solely by capillary forces.
Moreover, the system under consideration may be viewed as a two-phase generalization of the classical Thin Film equation.
\end{abstract}

\maketitle

\section{Introduction and the main result}\label{S:0}

In this paper we study the following system of one-dimensional degenerated parabolic equations 
 \begin{subequations}\label{P}
\begin{equation}\label{S}
\left\{
\begin{array}{llll}
\p_t f=\displaystyle-\p_x\left( f^3\p_x^3f+\frac{R}{2}\left(2f^3+3f^2g\right)\p_x^3(f+g)\right),\\[2ex]
\p_tg=\displaystyle-\p_x   \left(\frac{3}{2}  f^2g\p_x^3f+\frac{R}{2}\left(2\mu g^3+3f^2g+6fg^2\right)\p_x^3(f+g)\right)
\end{array}
\right.
\end{equation}
for $ (t,x)\in (0,\infty)\times \cI$, where $\cI:=(0,L)$ for some $L>0$. 
This system models
the motion of the interfaces of two immiscible  fluid layers of width $f$ and $g$,
respectively.  
The layer of width $f$ is located on a impermeable horizontal bottom, identified
with the line $y=0,$ and the layer of width $g$ is located on top of the first
one. 
The system \eqref{S} has been recently derived  in \cite{EMM12b} as a thin film
approximation of the two-phase Stokes problem when capillary is the sole driving
mechanism. 
The constants $R$ and $\mu,$ which are both assumed to be positive, are
determined by material properties of the fluids and are given by
\[
\mu:=\frac{\mu_f}{\mu_g}\qquad\text{and}\qquad R:=\frac{\gamma_g}{ \gamma_f},
\]
with $\mu_f$ [resp. $\mu_g$] denoting the viscosity coefficient of the fluid
layer of width $f$ [resp. $g$].
Moreover,   $\gamma_f$ [resp. $\gamma_g$] is the surface tension coefficient at
the interface $y=f(t,x)$ [resp. $y=f(t,x)+g(t,x)$].
The system \eqref{S}  is supplemented by the initial conditions
\begin{equation}\label{eq:bc1}
 f(0)=f_0,\qquad g(0)=g_0 \qquad  \text{in $\cI$,}
 \end{equation}
whereby $f_0$ and $g_0$ are assumed to be known non-negative functions, and by no-flux
boundary conditions 
\begin{equation}\label{eq:bc2}
 \p_xf=\p_xg=\p^3_xf=\p_x^3 g=0,\qquad x=0, L.
 \end{equation}
\end{subequations}

Let us first observe that if one of the fluid layer has constant zero width,
then the system \eqref{S} becomes, up to a scaling factor,  the well-known
Thin Film equation
\begin{equation}\label{eq:TF}
\p_t h=\p_x^3(h^m\p_xh), \qquad m>0,
\end{equation}   
with $m=3$.
The  theory of existence of weak solutions for the Thin Film equation  \eqref{eq:TF}  is well-established nowadays, cf.   \cite{B93, BF90,   BP94, BP96, BG98, T07}, to mention just some of the most important contributions. 
We emphasize that it has been rigorously  proved in \cite{ GP08,  MP12} (see also \cite{GO03}) that  suitably
rescaled solutions of the Stokes and of the Hele-Shaw problem converge towards
corresponding solutions of the equation \eqref{eq:TF} with $m=3$ and  $m=1$, 
respectively. 
Compared to the Thin Film equation \eqref{eq:TF}, the system \eqref{P} is much
more complex because it exhibits a strong coupling as  both equations contain
highest order derivatives of all the unknowns.  
There are also two sources of degeneracy because both interfaces may vanish on 
subsets of the interval $\cI.$

It is worth mentioning that there exists also a two-phase generalization
corresponding to the  thin-film equation \eqref{eq:TF} with $m=1$, which has
been derived in \cite{EMM12} for flows with  capillary and gravity effects.
The resulting    system, which  has been  investigated in 
\cite{EM12x, LM12xx, BM12} in the presence of capillary and in \cite{ELM11,
LM12x}  for flows driven only by gravity, appears as the thin layer
approximation of the the two-phase Muskat problem.
Compared to \eqref{S}, the parabolic system obtained in \cite{EMM12} has much
more structure: there are two energy functionals available and, furthermore,  
the system can be interpreted as  a gradient flow for the {$L_2$}-Wasserstein distance
in the space of probability measures with finite second moment.
There are not many systems of  equations which enjoy this nice geometric property. 
We mention that the parabolic-parabolic  Keller-Segel system which has a mixed
$L_2$-Wasserstein gradient flow structure \cite{BLxx}.
As far as we know, the two-phase generalization of the thin-film equation
\eqref{eq:TF} with $m\notin\{1,3\}$ has not been discovered yet.

When studying the problem \eqref{P}, one has to rely only on the energy
functional 
\[
\E(f,g):=\frac{1}{2}\int_\cI |\p_x f|^2+R| \p_x(f+g)|^2\, d x,
\]
a fact which forces us to introduce here a weaker notion of solutions than that in \cite{LM12x, BM12}.
Indeed, it is not difficult to see that the functional $\E$ decreases along
smooth solutions  of  \eqref{P}, as we have
\begin{align}
\frac{d\E(f,g)}{dt}=&\int_\cI ((1+R)\p_xf+R\p_xg)\p_x (\p_tf)+R(\p_x(f+g))\p_x(\p_tg)\, d x\nonumber\\=&-\int_\cI ((1+R)\p_x^2f+R\p_x^2g) \p_tf+R( \p_x^2(f+g))\p_tg\, d x\nonumber\\
=&-\int_\cI((1+R)\p_x^3f+R\p_x^3g)\left(f^3\p_x^3f+\frac{R}{2}\left(2f^3+3f^2g\right)\p_x^3(f+g)\right)\, d x\nonumber\\
&-\int_\cI R \p_x^3(f+g)   \left(\frac{3}{2}  f^2g\p_x^3f+\frac{R}{2}\left(2\mu g^3+3f^2g+6fg^2\right)\p_x^3(f+g)\right)\, d x\nonumber\\
=&-\mu R^2 \int_\cI g^3\left|\p_x^3(f+g)\right|^2 \, dx-\int_\cI f\left|f\p_x^3f +\frac{R}{2}(2f+3g)\p_x^3(f+g)\right|^2\, dx\nonumber\\
&-\frac{3R^2}{4}\int_\cI fg^2\left|\p_x^3(f+g)\right|^2\, dx.\label{E1}
\end{align}
Introducing  a suitable regularized version of \eqref{S},  we construct first,
for non-negative initial data,  globally defined  
Galerkin approximations which are found to converge towards weak solutions of the
approximating systems.
On the other hand,  the energy functional $\E$ may be used to obtain  estimates
for the solutions of the  approximating systems and we obtain sufficient
information  to prove that they converge towards weak solutions of the original
problem \eqref{P}.
Though it is a priori not clear whether the weak solutions of the approximating systems
are non-negative, we prove that the  weak solutions of  \eqref{P} have this
property.   
This differs from the framework of thin fluid models with capillary effects and
insoluble surfactant \cite{EHLW12, GW06}  where the approximating regularized problems may be constructed such  that starting from
non-negative initial data the associated weak solutions  are
also non-negative.

Given $T\in (0,\infty],$ let $Q_T:=(0,T)\times\cI.$ 
The main result of this paper is the following theorem, 
establishing the existence of global and non-negative weak solutions for the
problem \eqref{P}
that  start from arbitrary non-negative initial data.
\begin{thm}\label{T:M} 
Let $f_0,g_0\in H^1(\cI)$ be two non-negative functions. 
 Then, there exists at least a  weak global solutions $(f, g)$ of problem
\eqref{P} with the following properties:
\begin{itemize}
\item[(a)] $ f,g\in L_\infty(0,T;H^1(\cI))\cap \left(\cap_{\alpha\in[0,1/2)}C([0,T], C^\alpha(\ov\cI))\right) $ for all $T>0$;\medskip
\item[(b)] $ (f, g)(0)=(f_0,g_0)$ and $f\geq0,$ $ g\geq0$ in $(0,\infty)\times\cI$; \medskip
\item[(c)] the mass of the fluids is conserved, that is for every $t>0$
\[
 \|f(t) \|_{L_1}=\|f_0\|_{L_1} \qquad\text{and}\qquad\|g(t)\|_{L_1}=\|g_0\|_{L_1};
\]

\item[(d)] defining for every $T>0 $ the sets
\begin{align*}
 \cP_f:=\{(t,x)\in Q_T\,:\, f(t,x)>0)\}, \qquad \cP_g:=\{(t,x)\in Q_T\,:\, g(t,x)>0)\},
\end{align*}
we have $\p_x^3f, \p_x^3g\in L_{2,loc}(\cP_f\cap \cP_g) $ and there exists functions $j_f, j_g, j_{f,g}\in L_2(Q_T)$ with
\begin{equation*}
\left.
\begin{array}{llll}
j_f=f^{1/2}\left(f\p_x^3f+\frac{R}{2} (2f+3fg)\p_x^3(f+g)\right),\\[2ex]
j_{g}=g^{3/2}\p_x^3(f+g), \quad  j_{f,g}=f^{1/2}g\p_x^3(f+g)
\end{array}
\right.  \qquad\text{a.e. in $\cP_f\cap \cP_g$,}
\end{equation*}
and such that  
\[\text{ $H_f:=f^{3/2}j_f$,\quad $ H_g:=\mu Rg^{3/2}j_g+\frac{3R}{4}f^{1/2}gj_{f,g}+\frac{3}{2}f^{1/2}gj_f$ \qquad belong to $L_2(Q_T)$,}\]
and
\begin{align}
&\int_{Q_T}f  \p_t\xi\, dxdt+\int_{Q_T}H_f \p_x\xi\, dxdt={ \int_\cI f(T,x)  \xi(T,x )\, dx}-\int_\cI f_0\xi(0,x)\, dx, \label{I1}\\[1ex]
&\int_{Q_T}g  \p_t\xi\, dx dt+\int_{Q_T} H_g\p_x\xi\, dxdt={ \int_\cI g(T,x)  \xi(T,x )\, dx}-\int_\cI g_0\xi(0,x)\, dx \label{I2}
\end{align}
for all    $\xi\in C^\infty(\ov Q_T)$;
\item[(e)] the energy inequality 
\begin{align}
 \E(f(T),g(T))&+\int_{\cP_f\cap\cP_g}  \mu R^2 \left|j_g\right|^2 +\frac{3R^2}{4} \left|j_{f,g}\right|^2+\left|j_f \right|^2dxdt\leq \E(f_0,g_0)\label{Energy}
 \end{align}
is satisfied for almost all $T>0.$
\end{itemize}
\end{thm}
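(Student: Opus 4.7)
The approach is the standard regularize--Galerkin--compactness scheme for degenerate fourth-order parabolic problems, adapted to the two-phase coupling. As a first step I would introduce a parameter $\e>0$ and a non-degenerate regularization of \eqref{S} in which all mobility coefficients $f^3,\,f^2g,\,fg^2,\,g^3$ are replaced by uniformly positive, globally bounded cut-offs (for instance, $\e$-shifted and $(1/\e)$-truncated versions), so that the resulting system becomes uniformly fourth-order parabolic while still producing an analogue of the identity \eqref{E1}.

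For fixed $\e$, solutions of the regularized system would be built by a Galerkin scheme in the basis $\{w_k\}$ of Neumann eigenfunctions of $-\p_x^2$ on $\cI$, which is compatible with both boundary conditions in \eqref{eq:bc2}. The resulting ODE system is locally solvable, and testing with combinations of $-\p_x^2 f_\e$ and $-\p_x^2(f_\e+g_\e)$ reproduces \eqref{E1} with strictly positive mobilities, giving an a priori bound on the Galerkin solutions in $L_\infty(0,T;H^1(\cI))$. A standard Aubin--Lions argument, combined with a bound on $(\p_t f_\e,\p_t g_\e)$ in a suitable dual space obtained directly from the weak formulation, allows the passage to the limit in the Galerkin index and produces a global weak solution $(f_\e,g_\e)$ of the regularized problem.

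The core of the proof is the limit $\e\to 0$. The regularized energy identity provides $\e$-uniform bounds for $(f_\e,g_\e)$ in $L_\infty(0,T;H^1(\cI))$ and for the three weighted fluxes $j_{f,\e},\,j_{g,\e},\,j_{f,g,\e}$ in $L_2(Q_T)$. Combined with a dual bound on $(\p_t f_\e,\p_t g_\e)$, Arzel\`a--Ascoli in time and the Sobolev embedding $H^1(\cI)\hookrightarrow C^\alpha(\ov\cI)$ yield strong convergence $(f_\e,g_\e)\to(f,g)$ in $C([0,T],C^\alpha(\ov\cI))$ for every $\alpha\in[0,1/2)$, while the fluxes converge weakly in $L_2(Q_T)$. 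On the open set $\cP_f\cap\cP_g$ the limits $f,g$ are locally bounded below by positive constants, so one can locally invert the weight factors to identify $\p_x^3 f,\p_x^3 g$ in $L_{2,\mathrm{loc}}(\cP_f\cap\cP_g)$ and recover the structural formulas in (d). The identities \eqref{I1}--\eqref{I2} then follow by passing to the limit after integration by parts in the regularized weak formulation, mass conservation (c) is obtained by choosing $\xi\equiv 1$, and the energy inequality (e) comes from weak lower semicontinuity of the relevant $L_2$ and $H^1$ norms.

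The main obstacle is the non-negativity claim in (b), because the regularized problem does not admit a comparison principle and the Galerkin approximations may change sign. I would handle this via a Bernis--Friedman type entropy: pick an auxiliary entropy density $G_\alpha$ that is convex on $\R$ and singular as its argument approaches $0$ from above, with $G_\alpha''$ tailored to compensate the regularized mobility, so that $\tfrac{d}{dt}\int_\cI G_\alpha(f_\e)\,dx$ and $\tfrac{d}{dt}\int_\cI G_\alpha(g_\e)\,dx$ remain controlled uniformly in $\e$ by the dissipation coming from \eqref{E1}; sending $\e\to 0$ then forces $f,g\geq 0$ almost everywhere. A secondary delicate point is to check that the $L_2(Q_T)$-functions $H_f,H_g$ produced by the compactness step genuinely agree with the weighted combinations of $j_f,j_g,j_{f,g}$ across the touching set $\{fg=0\}$; this is done by exploiting the fact that the weight factors multiplying each $j$ vanish precisely where the pointwise identification could otherwise fail.
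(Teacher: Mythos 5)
Your overall architecture (Lipschitz regularization of the mobilities, Galerkin in the Neumann cosine basis, energy identity by testing with second derivatives, Aubin--Lions, identification of the fluxes on $\cP_f\cap\cP_g$ where the weights are locally bounded below, and lower semicontinuity for the energy inequality) coincides with the paper's proof, and your remark about identifying $H_f,H_g$ across $\{fg=0\}$ via the vanishing weight factors is exactly how the paper argues. The genuine gap is in your treatment of non-negativity, which is precisely the step the authors single out as non-standard for this system. A Bernis--Friedman entropy with $G_\alpha''$ "tailored to compensate the regularized mobility" means, for the $f$-equation, $G_\alpha''(f_\e)\sim a_\e^{-3}(f_\e)$. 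This diagonalizes the self-interaction term, but the flux in the $f$-equation also contains the cross term $\tfrac{3R}{2}a_\e^2(f_\e)a_\e(g_\e)\p_x^3(f_\e+g_\e)$; after multiplication by $G_\alpha''(f_\e)\p_xf_\e$ one is left with $\tfrac{3R}{2}\,a_\e(g_\e)a_\e^{-1}(f_\e)\,\p_xf_\e\,\p_x^3(f_\e+g_\e)$, and the ratio $a_\e(g_\e)/a_\e(f_\e)$ is not controlled by anything. Estimating this term against the dissipation from \eqref{E1} (which bounds $a_\e^{1/2}(f_\e)a_\e(g_\e)\p_x^3(f_\e+g_\e)$ in $L_2(Q_T)$) would require a bound on $\int a_\e^{-3}(f_\e)|\p_xf_\e|^2$, i.e.\ on the entropy production itself, so the argument does not close; the analogous cross term $\tfrac32 a_\e^2(f_\e)a_\e^{-2}(g_\e)\p_xg_\e\,\p_x^3f_\e$ obstructs the $g$-entropy in the same way, and there is no visible cancellation in a combined entropy. (There is also the preliminary issue that a density singular at $0^+$ has infinite initial value for merely non-negative $f_0$.)

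The paper's substitute is quantitative rather than entropy-based: it tests the Galerkin equations with $\chi_\delta'(f_\e^n)$, where $\chi_\delta$ is a smooth approximation of $s\mapsto\max\{-s,0\}$ whose second derivative is supported in $[-\delta,0]$, and then chooses $\delta=\sqrt{\e}$. On the support of $\chi_\delta''(f_\e)$ one has $a_\e(f_\e)\equiv\e$ exactly, so every mobility weight there is an explicit power of $\e$; Cauchy--Schwarz against the dissipation terms controlled by \eqref{13} then yields $\bigl|\int_\cI\chi_{\sqrt{\e}}(f_\e(T))\,dx\bigr|\leq C\sqrt{T}\,\e$ and the analogous bound for $g_\e$, and letting $\e\to0$ along the convergent subsequence forces $\max\{-f,0\}=\max\{-g,0\}=0$. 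Note also that this identity must be derived at the Galerkin level and passed to the limit in $n$, since $\p_tf_\e$ only lives in $L_2(0,T;(H^1(\cI))')$ and the chain rule for $t\mapsto\int_\cI\chi_\delta(f_\e)\,dx$ needs justification; your proposal leaves this implicit as well. The remaining parts of your outline are sound and match the paper (mass conservation follows either from $\xi\equiv1$ or, as in the paper, from the constancy of the zeroth Fourier mode).
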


We emphasize that due to the lack of regularity of the weak solutions $(f,g)$ found in Theorem \ref{T:1},
which is mainly due to the strong coupling of the system \eqref{S}, we can identify the 
 function $H_f$ only in    $\cP_g$ and $H_g$ only in $\cP_f$:
 \begin{align*} 
H_f=&\left(f^3\p_x^3f+\frac{R}{2}\left(2f^3+3f^2g\right)\p_x^3(f+g)\right)\mathbf{1}_{(0,\infty)}(f)\qquad \text{a.e. in $\cP_g$},\\[1ex]
H_g=&\left(\frac{3}{2} f^2g\p_x^3f+\frac{R}{2}\left(2\mu g^3+3f^2g+6fg^2\right)\p_x^3(f+g)\right)\mathbf{1}_{(0,\infty)}(g)\qquad\text{a.e. in $\cP_f$.}
\end{align*}
Particularly, if the test function $\xi$ in \eqref{I1} satisfies additionally $\supp\p_x\xi\subset\cP_g,$ then  \eqref{I1} is exactly the  equation one obtains when multiplying the first equation by \eqref{S} by $\xi$
and intergrating by parts (similarly for \eqref{I2}).

The outline of the paper is as follows: in Section \ref{S:2} we regularize the
problem and construct global weak solutions for the approximating regularized 
systems.
Furthermore, we prove that any accumulation point of the set of approximating weak solutions has to be non-negative. 
Based upon the estimates deduced for this family of weak solutions, we prove in
Section \ref{S:3} that certain sequences of approximating weak solutions  converge, when letting the regularization parameter go to zero, towards non-negative weak solutions of the original problem \eqref{P}.

\section{The regularized approximating problems}\label{S:2}

In this section we construct a family of regularized  systems approximating in
the limit the original system \eqref{S}.
This is done in such a manner that the energy functional $\E$ still decreases  along 
solutions of the regularized system.
The advantage of such a construction is twofold: first it enables us to find
globally defined Galerkin approximations which are shown to converge towards
weak solutions of the approximating system, and secondly 
 it provides us with sufficient information in order to find accumulation points of
this family of weak solutions  which solve the problem \eqref{P} in the  weak
sense defined in Theorem \ref{T:M}.   
    
To proceed we define for every $\e\in(0,1]$, the Lipschitz continuous function
$a_\e:\R\to\R$ by the relation
\[
\text{$a_\e(s):=\e+\max\{0,s\}$ \qquad for $s\in\R$.}
\]
With this notation,  we introduce the following modified version of the original problem
\eqref{S}
 \begin{equation}\label{S1}
\left\{
\begin{array}{llll}
\p_t f_\e=\displaystyle-\p_x \left( a^3_\e(f_\e)\p_x^3f_\e+\frac{R}{2}\left(2a^3_\e(f_\e)+3a^2_\e(f_\e)a_\e(g_\e)\right)\p_x^3(f_\e+g_\e)\right),\\[2ex]
\p_tg_\e=\displaystyle-\p_x\left(  \frac{3}{2} a^2_\e(f_\e)a_\e(g_\e)\p_x^3f_\e\right.\\[2ex]
\hspace{2.2cm}\displaystyle\left.+\frac{R}{2}\left(2\mu a^3_\e(g_\e)+3a^2_\e(f_\e)a_\e(g_\e)+6a_\e(f_\e)a^2_\e(g_\e)\right)\p_x^3(f_\e+g_\e)\right),
\end{array}
\right.
\end{equation}
which is more regular than \eqref{S} in the sense that the coefficients of the
fourth-order derivatives are bounded from below by a positive constant depending
only on $\e$.
The system \eqref{S1} is supplemented by the initial and boundary conditions
\eqref{eq:bc1} and \eqref{eq:bc2}. 

The main result of this section is the following theorem, ensuring the solvability of the regularized   approximating problem \eqref{S1}, \eqref{eq:bc1}, and \eqref{eq:bc2} for any $\e\in(0,1].$ 
\begin{thm}\label{T:1} 
Let $f_0,g_0\in H^1(\cI)$ be two non-negative functions and $\e\in(0,1]$ be
fixed. 
 Then, there exists at  least a  couple of functions $(f_\e, g_\e)$ having the
following regularity
\begin{itemize}
\item $ \displaystyle f_\e,g_\e\in L_\infty(0,T;H^1(\cI))\cap L_2(0,T;H^3 (\cI))\cap\left(\cap_{\alpha\in[0,1/2)} C([0,T], C^\alpha(\ov\cI))\right),$
\item $\p_tf_\e,\p_tg_\e\in L_2(0,T; \left(H^1( \cI)\right)')$,
\end{itemize}
and  satisfying 
\begin{equation}\label{11}
\begin{aligned}
&\int_0^T\langle\p_t f_\e(t) | \xi(t) \rangle \, dt=\int_{Q_T}\left(a^3_\e(f_\e)\p_x^3f_\e   +\frac{R}{2}\left(2a^3_\e(f_\e )+3a^2_\e(f_\e )a_\e(g_\e)\right)\p_x^3(f_\e +g_\e )\right)\p_x\xi\, dx dt,\\[1ex]
&\int_0^T\langle\p_t g_\e (t)| \xi(t) \rangle \,dt=\int_{Q_T}\left(\frac{R}{2}\left(2\mu a^3_\e(g_\e)+3a^2_\e(f_\e)a_\e(g_\e)+6a_\e(f_\e)a^2_\e(g_\e)\right)\p_x^3(f_\e+g_\e)\right.\\[1ex]
&\hspace{4.6cm}\left.+\frac{3}{2}a^2_\e(f_\e)a_\e(g_\e)\p_x^3f_\e\right)\p_x\xi\, dx dt
\end{aligned}
\end{equation}
for all $T>0$ and all $\xi\in L_2(0,T;H^1(\cI)), $ whereby
$\langle\cdot|\cdot\rangle$ is the pairing between $H^1(\cI)$ and
$\left(H^1(\cI)\right)'$.
Moreover, $(f_\e,g_\e)(0)=(f_0,g_0)$,
 \begin{align}\label{12}
&\int_\cI f_\e(T)\,dx=\|f_0\|_{L_1}\quad\text{and}\quad\int_\cI
g_\e(T)\,dx=\|g_0\|_{L_1}\qquad\text{for all $T\geq0$},\\[1ex]
&\p_xf_\e(T)=\p_xg_\e(T)=0\qquad\text{at $x=0,L$ for almost all $T>0$,}\label{bb3}
\end{align}
and the energy inequality
\begin{align}
 &\E(f_\e(T),g_\e(T))+\int_{Q_T}a_\e(f_\e)
\left|a_\e(f_\e)\p_x^3f_\e+\frac{R}{2}
(2a_\e(f_\e)+3a_\e(g_\e))\p_x^3(f_\e+g_\e)\right|^2  dxdt\nonumber\\
&\phantom{=}+ \mu R^2\int_{Q_T} a_\e^3(g_\e)\left|\p_x^3(f_\e+g_\e)\right|^2dxdt+\frac{3R^2}{4}\int_{Q_T}a_\e(f_\e)a_\e^2(g_\e)\left|\p_x^3(f_\e+g_\e)\right|^2 dxdt\nonumber\\
&\leq \E(f_0,g_0)\label{13}
 \end{align}
is satisfied for almost all $T>0.$
\end{thm}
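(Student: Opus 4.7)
The strategy is a standard Faedo--Galerkin construction, made possible by the uniform lower bound $a_\e \geq \e > 0$, followed by a compactness and weak-convergence argument. Let $\{\varphi_k\}_{k\geq 0}$ denote the Neumann eigenfunctions of $-\p_x^2$ on $\cI$, i.e.\ $\varphi_0 := 1/\sqrt{L}$ and $\varphi_k(x) := \sqrt{2/L}\cos(k\pi x/L)$ for $k \geq 1$: they form an orthonormal basis of $L_2(\cI)$ and an orthogonal basis of every $H^m(\cI)$, every $\varphi_k$ satisfies $\p_x\varphi_k = \p_x^3\varphi_k = 0$ at $x = 0, L$, and $V_n := \mathrm{span}\{\varphi_0, \ldots, \varphi_n\}$ is invariant under $\p_x^2$. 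I would seek $(f_n, g_n) \in C^1([0, T_n); V_n \times V_n)$ with $(f_n(0), g_n(0)) := (\Pi_n f_0, \Pi_n g_0)$ satisfying \eqref{11} for every $\xi \in V_n$. Written in terms of Fourier coefficients this is a system of ODEs with locally Lipschitz right-hand side (since $a_\e$ is Lipschitz and bounded below by $\e$), so Picard--Lindel\"of yields a unique local solution.

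Inserting the admissible Galerkin test functions $v_1 := -((1+R)\p_x^2 f_n + R\p_x^2 g_n) \in V_n$ and $v_2 := -R\p_x^2(f_n + g_n) \in V_n$ into the equations for $f_n$ and $g_n$ respectively, and repeating the formal computation \eqref{E1} with $a_\e(\cdot)$ in place of the degenerate coefficients, yields the exact differential identity whose integrated form is \eqref{13}; in particular $\E(f_n(t), g_n(t)) \leq \E(f_0, g_0)$, which together with mass conservation (obtained by testing with $\xi \equiv 1 \in V_n$) yields a uniform $L_\infty(0, T; H^1(\cI))$ bound and therefore global existence of the Galerkin solutions. Because $a_\e \geq \e$, the $\mu R^2$-term in the dissipation controls $\p_x^3(f_n + g_n)$ in $L_2(Q_T)$, whence the first dissipation term controls $a_\e(f_n)\p_x^3 f_n$, and hence $\p_x^3 f_n$ and $\p_x^3 g_n$ separately, all uniformly in $n$. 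Plugging these bounds back into the weak form one then bounds $\p_t f_n$ and $\p_t g_n$ uniformly in $L_2(0, T; (H^1(\cI))')$.

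By Simon's compactness theorem, $(f_n)$ and $(g_n)$ are relatively compact in $C([0, T]; C^\alpha(\ov\cI))$ for every $\alpha \in [0, 1/2)$. Extract a subsequence converging strongly in this space, weakly in $L_2(0, T; H^3(\cI))$, weakly-$\ast$ in $L_\infty(0, T; H^1(\cI))$, with $\p_t f_n \rightharpoonup \p_t f_\e$ in $L_2(0, T; (H^1(\cI))')$ and similarly for $g_n$. Continuity of $a_\e$ yields $a_\e^j(f_n)a_\e^k(g_n) \to a_\e^j(f_\e)a_\e^k(g_\e)$ uniformly on $\ov Q_T$ for all $j, k \geq 0$, so the nonlinear flux terms pass to the limit as weak--strong products in $L_2(Q_T)$; this is enough to pass to the limit in \eqref{11}, first with $\xi$ in a fixed $V_m$ and $n \to \infty$, then by density for arbitrary $\xi \in L_2(0, T; H^1(\cI))$. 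Uniform convergence in $C([0, T]; C^\alpha)$ yields the initial conditions $(f_\e, g_\e)(0) = (f_0, g_0)$ and, via $\xi \equiv 1$, mass conservation \eqref{12}; the Neumann conditions \eqref{bb3} survive because $\{u \in H^3(\cI) : \p_x u(0) = \p_x u(L) = 0\}$ is weakly closed in $H^3(\cI)$; and \eqref{13} follows from weak lower semi-continuity of each $L_2$-norm entering the dissipation, for instance via $a_\e^{3/2}(g_n)\p_x^3(f_n + g_n) \rightharpoonup a_\e^{3/2}(g_\e)\p_x^3(f_\e + g_\e)$ in $L_2(Q_T)$.

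The crux of the argument is the identification of the nonlinear limits: the strong $L_\infty$-convergence of the truncated coefficients $a_\e^j(f_n)a_\e^k(g_n)$ (supplied by the uniform $C^\alpha$-convergence) must be combined with the merely weak $L_2$-convergence of the third-order derivatives. Equally crucial is the lower bound $a_\e \geq \e$, which both decouples the three dissipation terms so as to yield a uniform $L_2(0, T; H^3(\cI))$ bound for $f_n$ and $g_n$ and renders the Galerkin ODE non-degenerate; without this regularization neither the Picard step, nor the Simon compactness, nor the weak-strong limit of the fluxes could be closed.
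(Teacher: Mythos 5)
Your proposal is correct and follows essentially the same route as the paper's proof: a Faedo--Galerkin scheme in the Neumann cosine basis, the energy identity obtained by testing with $-((1+R)\p_x^2 f_n+R\p_x^2 g_n)$ and $-R\p_x^2(f_n+g_n)$, the decoupling of the dissipation terms via $a_\e\geq\e$ to get the $L_2(0,T;H^3)$ bound, Simon's compactness, and weak--strong limits of the fluxes. The only cosmetic differences are in bookkeeping (the paper truncates an arbitrary $\xi$ by its Fourier partial sums rather than arguing by density over the $V_m$, and obtains \eqref{bb3} from strong $C^2$-convergence at a.e.\ time rather than weak closedness in $H^3$), neither of which changes the substance.
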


\vspace{0.2cm}
\subsection{\bf Approximations of the weak solutions of \eqref{S1} by Fourier
series expansions}
$ $ \vspace{0.2cm}

In the remaining of this section $\e\in(0,1]$ is arbitrary but fixed.
In a first step we construct Galerkin approximations for the  weak solution of
the problem determined by \eqref{S1}, \eqref{eq:bc1} and \eqref{eq:bc2}.
 Letting
\[
\phi_0:=\sqrt{1/L}\qquad\text{and} \qquad \phi_k:=\sqrt{{2}/{L}}\cos(k\pi x/L),
\ k\geq 1,
\]
 denote  the normalized eigenvectors of the operator $-\p_x^2:H^2(\cI)\to
L_2(\cI)$ which satisfy zero Neumann boundary conditions,
it is well-known that  any function which belongs to $H^1(\cI)$ can be
represented in $H^1(\cI)$ by its trigonometric series. 
Let thus $f_0, g_0$  be two non-negative functions from $H^1(\cI)$. 
For each fixed $n\in\N$,  we consider the  partial sums 
 \[
f_0^n:=\sum_{k=0}^n f_{0k}\phi_k, \quad g_0^n:=\sum_{k=0}^n g_{0k}\phi_k
\] 
of the  series expansions for the initial conditions $(f_0,g_0 )$, and  we   
seek  for continuously differentiable  functions  
  \[
f_\e^n(t,x):=\sum_{k=0}^n F_\e^k(t)\phi_k(x), \quad g_\e^n(t):=\sum_{k=0}^n
G_\e^k(t)\phi_k(t,x)
\]
for $t\geq0 $ and $x\in\ov \cI,$ 
 which solve  \eqref{S1} when testing with functions from the linear subspace
$\langle\phi_0,\ldots,\phi_n\rangle,$ and which satisfy initially
 \[
f_\e^n(0)=f_{0}^n,\qquad g_\e^n(0)=g_{0}^n.
\]
By construction the functions $(f_\e^n,g_\e^n)$ satisfy the boundary
conditions \eqref{eq:bc2} and, if we 
test both equations of \eqref{S1} with $\phi_0$, it follows at once that
$F_\e^0$ and $G_\e^0$ are constant in time, that is
\begin{equation}\label{eq:00}
F_\e^0(t)=f_{00},\qquad G_\e^0(t)=g_{00},\qquad t\geq0.
\end{equation}
Additionally, testing the system \eqref{S1} successively with
$\phi_1,\ldots,\phi_n$, it follows that the $2n-$tuple $( \mathbb{
F},\mathbb{G}):=(F_\e^1,\ldots,F_\e^n,G_\e^1,\ldots,G_\e^n)$
is the solution of the initial value problem
\begin{equation}\label{eq:IVP}
(\mathbb{ F},\mathbb{G})'=\Psi( \mathbb{ F},\mathbb{G}),\qquad (\mathbb{
F},\mathbb{G})(0)=(f_{01},\ldots,f_{0n},g_{01},\ldots,g_{0n}),
\end{equation}
 whereby the function  $\Psi:=(\Psi_1,\Psi_2):\R^{2n}\to\R^{2n}$
 is  given by 
\begin{align*}
\Psi_{1,j}(p,q)=&\sum_{k=1}^n p_k\int_0^L
a^3_\e(\Xi_f(p))\p_x^3\phi_k\p_x\phi_j\, dx\\
&+R\sum_{k=1}^n(p_k+q_k)\int_0^L
\left(a_\e^3(\Xi_f(p))+\frac{3}{2}
a_\e^2(\Xi_f(p))a_\e(\Xi_g(q))\right)\p_x^3\phi_k\p_x\phi_j\, dx,
 \end{align*}
and
\begin{align*}
\Psi_{2,j}(p,q)=&\frac{3}{2}\sum_{k=1}
^n p_k\int_0^La_\e^2(\Xi_f(p))a_\e(\Xi_g(q))\p_x^3\phi_k\p_x\phi_j\, dx\\
&+R\sum_{k=1}^n(p_k+q_k)\int_0^L \left(\mu a_\e^3(\Xi_g(q))+\frac{3}{2}a_\e^2(\Xi_f(p))a_\e(\Xi_g(q))\right.\\
&\left.\hspace{3.5cm}\phantom{\frac{3}{2}}+3a_\e(\Xi_f(p))a_\e^2(\Xi_g(q))\right)\p_x^3\phi_k\p_x\phi_j\, dx\\
 \end{align*}
for $j\in\{1,\ldots,n\}$ and $(p,q)\in\R^{2n}$. 
We used here the shorthand
\[
\Xi_f(p)=f_{00}\phi_0+\sum_{l=1}^n p_l\phi_l,\quad
\Xi_g(q)=g_{00}\phi_0+\sum_{l=1}^n q_l\phi_l
\]
 for all $p,q\in\R^n.$
 Recalling that  $a_\e$ is a Lipschitz continuous function, we deduce that
$\Psi$ is locally Lipschitz continuous in $\R^{2n},$ 
and therefore the initial value problem  \eqref{eq:IVP} possesses a unique
solution $(\mathbb{ F},\mathbb{G})$ 
defined on a maximal time interval $[0,T_\e^n)$.
 In order to prove that the solution is global, that is $T_\e^n=\infty$ for all $n\in\N,$ we show that the energy functional $\E$ decreases
along $(f_\e^n,g_\e^n)$.
 Indeed, since $\p_x^2 f_\e^n(t),\p_x^2
g_\e^n(t)\in\langle\phi_0,\ldots,\phi_n\rangle$ for all $t\in[0,T_{\e}^n),$  we
may use them as test functions for \eqref{S1}. 
Integrating by parts,  we then find 
 \begin{align}
 \frac{d}{dt}\left(\E(f_\e^n,g_\e^n)\right)=&\int_\cI
((1+R)\p_xf_\e^n+R\p_xg_\e^n)\p_x (\p_tf_\e^n)+R(
\p_xf_\e^n+\p_xg_\e^n)\p_x(\p_tg_\e^n)\, d x\nonumber\\
 =&-\int_\cI ((1+R)\p_x^2f_\e^n+R\p_x^2g_\e^n)\p_tf_\e^n+R(
\p_x^2(f_\e^n+g_\e^n)) \p_tg_\e^n \, d x\nonumber\\
 =&-\mu R^2\int_\cI a_\e^3(g_\e^n)\left|\p_x^3(f_\e^n+g_\e^n)\right|^2dx-\frac{3R^2}{4}\int_\cI a_\e(f_\e^n)a_\e^2(g_\e^n)\left|\p_x^3(f_\e^n+g_\e^n)\right|^2 dx\nonumber\\
&-\int_\cI a_\e(f_\e^n)\left|a_\e(f_\e^n)\p_x^3f_\e^n+\frac{R}{2}(2a_\e(f_\e^n)+3a_\e(g_\e^n))\p_x^3(f_\e^n+g_\e^n)\right|^2 dx\label{E2}
 \end{align}
for all $t\in[0,T_{\e}^n),$
the last equality following similarly  \eqref{E1}.
Particularly, relation \eqref{E2}  ensures the boundedness  of the function
$(\mathbb{F},\mathbb{G})$ on $[0,T_\e^n).$
This means that the Galerkin approximations $(f_\e^n,g_\e^n)$ exist globally in
time for all $n\in\N$. 

\vspace{0.2cm}
\subsection{\bf Convergence of the Galerkin approximations}
$ $ \vspace{0.2cm}

We next identify  an accumulation point of the family $((f_\e^n, g_\e^n))_n$, which is
shown subsequently to be a weak solution of the regularized system in the sense of Theorem \ref{T:1}. 
To this end, let $T\in (0,\infty)$ be an arbitrary constant. 
Invoking \eqref{E2}, we deduce the uniform boundedness\footnote{{All the bounds in this section are uniform in $n\in\N$.}} of 
\begin{align}\label{B1}
& \p_xf_\e^n,\  \p_xg_\e^n \quad \text{ in $L_\infty(0,T; L_2(\cI))$,}\\[1ex]
\label{B2}
& a_\e^{3/2}(g_\e^n)\p_x^3(f_\e^n+g_\e^n), \, a_\e^{1/2}(f_\e^n)a_\e(g^n_\e)\p_x^3(f^n_\e+g^n_\e)\quad \text{in $L_2(Q_T)$,}\\[1ex]
\label{B3}
& a_\e^{1/2}(f_\e^n)\left(a_\e(f_\e^n)\p_x^3f_\e^n+\frac{R}{2}(2a_\e(f_\e^n)+3a_\e(g_\e^n))\p_x^3(f_\e^n+g_\e^n)\right)\quad \text{ in $L_2(Q_T)$,}
 \end{align}
while from \eqref{eq:00} we obtain that
\begin{align}\label{B4}
\int_\cI f_\e^n(t)\,dx=\|f_0\|_{L_1}\quad\text{and}\quad\int_\cI g_\e^n(t)\,dx=\|g_0\|_{L_1}\qquad\text{for all $t\geq0$}.
 \end{align}

First, we observe that Poincar\'e's inequality combined with \eqref{B1}  and \eqref{B4} imply that
 \begin{align}\label{U1}
& f_\e^n,\  g_\e^n \quad \text{are bounded in $L_\infty(0,T; H^1(\cI))$.}
 \end{align}
On the other hand, by construction  we know that  $a_\e\geq\e,$   and we infer  from  the relations   \eqref{B2}-\eqref{B4}, by using Poincar\' e's
inequality again and the uniform boundedness of $(f_\e^n)_n$ and $(g_\e^n)_n$ in $C(\ov Q_T)$, cf. \eqref{U1}, that
 \begin{align}\label{U2}
& f_\e^n,\  g_\e^n \quad \text{ are bounded in $L_2(0,T; H^3(\cI))$.}
 \end{align}

In the next step, we derive uniform bounds for the time derivatives of the Galerkin approximations. 
In order to do so,  we     observe that the relation of \eqref{S1} may be written  in a more concise form as
$\p_tf_\e^n=-\p_x H_f^{\e,n},$
whereby we set
\begin{align*}
H_f^{\e,n}:=&
a^3_\e(f_\e^n)\p_x^3f_\e^n+\frac{R}{2}
\left(2a^3_\e(f_\e^n)+3a^2_\e(f_\e^n)a_\e(g_\e^n)\right)\p_x^3(f_\e^n+g_\e^n)\\[2ex]
=&a^{3/2}_\e(f_\e^n)\left(a_\e^{1/2}(f_\e^n)\left(
a_\e(f_\e^n)\p_x^3f_\e^n+\frac{R}{2}
\left(2a_\e(f_\e^n)+3a_\e(g_\e^n)\right)\p_x^3(f_\e^n+g_\e^n)\right)\right).
\end{align*}
The relations \eqref{B3} and \eqref{U1} yield now that the sequence $(H_f^{\e,n})_n$ is bounded in $L_2(Q_T).$  
Given  $\zeta\in H^1(\cI)$, we define for each $n\in\N$ the truncation  
\[\zeta^n:=\sum_{k=0}^n(\zeta|\phi_k)_{L_2}\phi_k\in\langle\phi_0,\ldots,\phi_n\rangle.\]
Integration by parts then implies that
 \begin{align*}|(\p_tf^n_\e(t)|\zeta)_{L_2}|=&|(\p_tf^n_\e(t)|\zeta^n)_{L_2}|
=|(H_f^{\e,n}(t)|\p_x\zeta_n)_{L_2}|\leq \|H_f^{\e,n}(t)\|_{L_2 }\|\zeta^n\|_{H^1}\\
\leq &\|H_f^{\e,n}(t)\|_{L_2 }\|{\zeta}\|_{H^1}.
\end{align*}
Consequently, for every $t\in[0,T]$, the function  $\p_tf_\e^n(t)$ belongs to  the dual $\left(H^1(\cI)\right)'$ of $H^1(\cI)$  and, integration with respect to time, yields
\[
\|\p_tf_\e^n\|_{L_2(0,T;\left(H^1(\cI)\right)')}\leq \|H_f^{\e,n}\|_{L_2 (Q_T)}.
\]
We claim that a similar estimate is valid also for $\p_t g_\e^n.$ 
Indeed, the second relation of \eqref{S1} may be recast as the equation
$\p_tg_\e^n=-\p_x H_g^{\e,n},$
whereby
 \begin{align}
H_g^{\e,n}:=& \frac{3}{2} a^2_\e(f_\e^n)a_\e(g_\e^n)\p_x^3f_\e^n+\frac{R}{2}\left(2\mu a^3_\e(g_\e^n)+3a^2_\e(f_\e^n)a_\e(g_\e^n)+6a_\e(f_\e^n)a^2_\e(g_\e^n)\right)\p_x^3(f_\e^n+g_\e^n)\nonumber\\
=&\mu R a_\e^{3/2}(g_\e^n)\left(a_\e^{3/2}(g_\e^n)\p_x^3(f_\e^n+g_\e^n)\right)+\frac{3R}{4}a_\e^{1/2}(f_\e^n)a_\e(g_\e^n)\left(a_\e^{1/2}(f_\e^n)
a_\e(g^n_\e)\p_x^3(f^n_\e+g^n_\e)\right)\nonumber\\
&+\frac{3}{2}a_\e^{1/2}(f_\e^n)a_\e(g_\e^n)\left(a_\e^{1/2}(f_\e^n)\left(a_\e(f_\e^n)\p_x^3f_\e^n+\frac{R}{2}\left(2a_\e(f_\e^n)+3a_\e(g_\e^n)\right)\p_x^3(f_\e^n+g_\e^n)\right)\right).\label{HHH}
\end{align}
Gathering   \eqref{B2}, \eqref{B3}, and \eqref{U1}, we see that $(H_g^{\e,n})_n$ is a bounded sequence in $L_2 (Q_T),$
and we  finally conclude  that 
  \begin{align}\label{U3}
 (\p_tf_\e^n)_n,\  (\p_tg_\e^n)_n &\quad \text{are  bounded in $L_2(0,T;  (H^1(\cI))')$.}
\end{align}

Using an argument based on the Aubin-Lions Lemma, cf. Corollary 4 in \cite{Si87}, together with the continuity of the embeddings 
\[
\begin{array}{lll}
&H^1(\cI)\overset{comp}\hookrightarrow C^\alpha(\ov\cI)\hookrightarrow (H^1(\cI))' \\[1ex]
 &H^3(\cI)\overset{comp}\hookrightarrow C^{2+\alpha}(\ov\cI)\hookrightarrow (H^1(\cI))'
\end{array} \qquad \text{for $\alpha\in[0,1/2),$}
\]
we conclude from \eqref{U2} and \eqref{U3}
that the sequences 
$(f_\e^n)_n$ and $(g_\e^n)_n$ are both  relatively compact in $C([0,T],C^\alpha(\ov\cI))\cap L_2(0,T;C^{2+\alpha}(\ov\cI))$
for all $\alpha\in[0,1/2).$
Hence, using a diagonal procedure, we find  functions 
$f_\e$ and $g_\e$ and subsequences of  $(f_\e^n)$ and $(g_\e^n)$  (not relabeled)
 such that
 \begin{equation}\label{U4}
f_\e^n\to f_\e\quad\text{and}\quad  g_\e^n\to g_\e\quad \text{in $C([0,T],C^\alpha(\ov\cI))\cap L_2(0,T;C^{2+\alpha}(\ov\cI))$}
\end{equation}
for all $\alpha\in[0,1/2).$
Furthermore,  let us observe that the relations \eqref{U1}, \eqref{U2}, and \eqref{U4} ensure that the
limit functions belong also to $f_\e,g_\e \in L_\infty(0,T;H^1(\cI))\cap L_2(0,T;H^3(\cI))$ and that  
 \begin{equation}\label{U5}
\p_x^k f_{\e}^n\rightharpoonup  \p_x^kf_\e\quad\text{and}\quad  \p_x^k
g_{\e}^n\rightharpoonup  \p_x^kg_\e\quad \text{ in $L_2(Q_T)$ for $k=1,2,3.$} 
\end{equation}
Finally,  from \eqref{U3} we obtain that  $\p_tf_\e,\  \p_tg_\e \in L_2(0,T; (H^1(\cI))')$, and 
 \begin{align}\label{U6}
 \p_tf_\e^n\rightharpoonup\p_tf_\e,\ 
\p_tg_\e^n\rightharpoonup\p_tg_\e\qquad\text{in $ L_2(0,T;  (H^1(\cI))')$}.
\end{align}
The fact that $(f_\e, g_\e)$ can be defined globally follows
by using a standard Cantor  diagonal argument (choosing  a sequence $T_n\nearrow \infty$).

\vspace{0.2cm}
\subsection{Construction of the weak solutions for the regularized system}
$ $ \vspace{0.2cm}

In this last part of Section \ref{S:2} we prove that the functions $(f_\e,g_\e)$ constructed in (\ref{U4}) are weak solutions of \eqref{S1}, \eqref{eq:bc1}, and \eqref{eq:bc2}, and enjoy all the properties
stated in Theorem \ref{T:1}. 
First, let us observe that the functions $f_\e$ and $ g_\e$ possess the regularity and integrability properties required in Theorem \ref{T:1}. 
Moreover, because  $f_0\in H^1(\cI) $ and due to \eqref{U4}, we have that
$f_\e(0)=f_0$ and $g_\e(0)=g_0$.
Furthermore, combining  \eqref{B4} and \eqref{U4}, it follows that the identities \eqref{12} are satisfied.

Concerning \eqref{bb3}, we note that  \eqref{U4} guarantees that $f_\e^n(T)\to f_\e(T)$ in $C^2(\ov \cI)$ for almost all $T\geq0.$ 
Because $\p_xf_\e^n(T)=0$ at $x=0,L,$ the desired claim \eqref{bb3} for $f_\e$  (and similarly for $g_\e$) is immediate. 

We next prove that the  energy estimate \eqref{13} is satisfied by the functions $(f_\e,g_\e)$. 
To this end, we infer from the relations \eqref{B2}, \eqref{B3}, \eqref{U4}, and \eqref{U5}, by using also the Lipschitz
continuity of the map $a_\e$ and  after extracting further subsequences of $(f_\e^n)$ and $(g_\e^n)$ (not relabeled) that we have the following weak convergences in $L_2(Q_T):$
\begin{align*}
 &a_\e^{3/2}(g_\e^n)\p_x^3(f_\e^n+g_\e^n)\rightharpoonup a_\e^{3/2}(g_\e)\p_x^3(f_\e+g_\e),\\
&a_\e^{1/2}(f_\e^n)a_\e(g^n_\e)\p_x^3(f^n_\e+g^n_\e)\rightharpoonup a_\e^{1/2}(f_\e)a_\e(g_\e)\p_x^3(f_\e+g_\e),\\
 &a_\e^{1/2}(f_\e^n)\left(a_\e(f_\e^n)\p_x^3f_\e^n+\frac{R}{2}(2a_\e(f_\e^n)+3a_\e(g_\e^n))\p_x^3(f_\e^n+g_\e^n)\right)\\
&\hspace{4cm}\rightharpoonup a_\e^{1/2}(f_\e)\left(a_\e(f_\e)\p_x^3f_\e+\frac{R}{2}(2a_\e(f_\e)+3a_\e(g_\e))\p_x^3(f_\e+g_\e)\right).
\end{align*}
Recalling that $f_\e^n(t)\to f_\e(t)$, $g_\e^n(t)\to g_\e(t)$ in $C^2(\ov \cI)$ for almost all $t\geq0,$
and that the initial data  $f_0,g_0$ belong to $ H^1(\cI),$ we conclude after integrating \eqref{E2} with
respect to time and passing  to $\liminf_{n\to\infty} $  that
the desired energy inequality \eqref{13} is satisfied.

To finish the proof of Theorem \ref{T:1}, we are only left to  prove  relations \eqref{11}.
Let therefore $\xi\in L_2(0,T;H^1(\cI))$ be given,  and define for each $n\in\N$ the truncation
\[
\xi^n(t,\cdot):=\sum_{k=0}^n(\xi(t,\cdot)|\phi_k)_{L_2}\phi_k, \qquad t\in(0,T).
\] 
Using integrating by parts, we find in a similar way as before that
\begin{align*}
 \int_\cI\p_t f_\e^n(t) \xi^n(t) \, dx=-\int_\cI\xi^n(t)\p_x H_f^{\e, n}(t)\, dx=\int_\cI H_f^{\e, n}(t)\p_x \xi^n(t)\, dx,
\end{align*}
whence, we have 
\begin{align}\label{L1}
 \int_0^T\langle\p_t f_\e^n(t)| \xi^n(t)\rangle \, dt=&\int_0^T(\p_t f_\e^n(t)|
\xi^n(t))_{L_2} \, dt=\int_{Q_T} H_f^{\e, n}(t)\p_x \xi^n(t)\, dx
\end{align}
for all $n\geq0.$
Since by Lebesgue's dominated convergence  $\xi^n\to\xi$ in 
$L_2(0,T;H^1(\cI))$ we find, together with \eqref{U6}, that
 \begin{align}\label{L2}
\int_0^T\langle\p_t f_\e^n(t)| \xi^n(t)\rangle \, dt\to\int_0^T\langle\p_t f_\e(t) | \xi(t)\rangle \, dt.
\end{align}
Furthermore, \eqref{B2}, \eqref{U1}, \eqref{U4}, and \eqref{U5} ensure that, after extracting further subsequences,
we have
 $H_f^{\e,n}\rightharpoonup H_f^{\e}$ in  $L_2(Q_T)$, whereby we set
\begin{align}\label{L3}
 & H_f^{\e}:=a^3_\e(f_\e )\p_x^3f_\e  
+\frac{R}{2}\left(2a^3_\e(f_\e )+3a^2_\e(f_\e )a_\e(g_\e )\right)\p_x^3(f_\e
+g_\e ).
\end{align}
Letting now $n\to\infty$ in \eqref{L1}, we obtain from \eqref{L2} and  \eqref{L3} the first relation  of \eqref{11}. 
On the other hand,  combining the relations \eqref{B2}, \eqref{B3}, \eqref{U1},
\eqref{U4}, and \eqref{U5} we may assume that $H_g^{\e,n}\rightharpoonup H_g^\e$
in  $L_2(Q_T)$, with $H_g^\e$  given by
\begin{align*}
 &  H_g^\e:=\frac{3}{2}  a^2_\e(f_\e)a_\e(g_\e)\p_x^3f_\e+\frac{R}{2}\left(2\mu a^3_\e(g_\e)+3a^2_\e(f_\e)a_\e(g_\e)+6a_\e(f_\e)a^2_\e(g_\e)\right)\p_x^3(f_\e+g_\e).
\end{align*}
Repeating the arguments presented above we conclude that the second identity of \eqref{11} is also satisfied, and the proof of Theorem \ref{T:1} is complete.

Let us remark that we do not know  whether the weak  solutions $(f_\e,g_\e)$ found in
Theorem \ref{T:1} are non-negative.
The next lemma though,  together with the convergence results that we will provide in
the next section yields the non-negativity of the weak solutions of problem
\eqref{P}, which are found as being accumulation points of the family $((f_\e,g_\e))_{\e\in(0,1]}$, cf. Lemma \ref{L:1} and Corollary \ref{C:1} below.   
To this end, we introduce the following notation.
Pick a function $\varphi\in C^\infty(\R)$, which is non-negative, has support
contained in $[-1,0]$, and satisfies
 \[
\int_{\R}\varphi(x)\,dx=1.
\]
Moreover, let the function $\chi_1:\R\to\R$ be defined  by the relation
\[
\chi_1(x):=-\int_0^x\int_s^\infty\varphi(\tau)\,d\tau ds\qquad\text{for $x\in\R$,}
\]
and $(\chi_\delta)_{\delta>0}$ be the associated mollifier, that is
$\chi_\delta(x):=\delta\chi_1(x/\delta)$ for $x\in\R$ and $\delta>0.$
The following properties of $(\chi_\delta)_{\delta>0}$ play an important role in
the proof of Lemma \ref{L:1}:
\begin{align}
&\|\chi_\delta-\max\{-\id,0\}\|_{L_\infty(\R)}\leq \delta,\label{M1}\\
&\text{$\|\chi_\delta'\|_{L_\infty(\R)}\leq 1$, \,
$\|\chi_\delta''\|_{L_\infty(\R)}\leq \delta^{-1}\|\varphi\|_{L_\infty(\R)}$, \,  and 
$\|\chi_\delta'''\|_{L_\infty(\R)}\leq \delta^{-2}\|\varphi'\|_{L_\infty(\R)}$}\label{M2}
\end{align}
for all $\delta>0.$

\begin{lemma}\label{L:1} 
The functions $(f_\e,g_\e)$ found in Theorem \ref{T:1} satisfy
\begin{equation}
 \left|\int_\cI\chi_{\sqrt{\e}}(f_\e(T))\, dx\right|\leq  C\sqrt{T}\e\text{\quad and\quad}\left|\int_\cI\chi_{\sqrt{\e}}(g_\e(T))\, dx\right|\leq
C\sqrt{T\e}\label{Neg}
\end{equation}
for all $\e\in(0,1] $ and all $T\geq0.$
 \end{lemma}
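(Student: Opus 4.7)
The idea is to test the weak equations \eqref{11} with $\chi_{\sqrt{\e}}'(f_\e)$ and $\chi_{\sqrt{\e}}'(g_\e)$, respectively. These are admissible elements of $L_2(0,T;H^1(\cI))$ thanks to \eqref{M2} and the regularity $f_\e,g_\e\in L_2(0,T;H^3(\cI))\cap L_\infty(0,T;H^1(\cI))$. Since $\chi_\delta\equiv 0$ on $[0,\infty)$ (because $\supp\varphi\subset[-1,0]$ implies $\int_s^\infty\varphi\,d\tau=0$ for $s\geq 0$) and $f_0,g_0\geq 0$, the initial contributions $\int_\cI\chi_{\sqrt{\e}}(f_0)\,dx$ and $\int_\cI\chi_{\sqrt{\e}}(g_0)\,dx$ vanish identically, so the quantity $\int_\cI\chi_{\sqrt{\e}}(f_\e(T))\,dx$ measures (up to an $\mathcal O(\sqrt{\e})$ error) the negative mass of $f_\e(T)$.

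The chain rule---cleanest to justify on the Galerkin approximations, where $\p_tf_\e^n\in L_2(Q_T)$ and the boundary contributions vanish because $\p_xf_\e^n=\p_x^3f_\e^n=0$ at $x=0,L$ forces $H_f^{\e,n}(t,0)=H_f^{\e,n}(t,L)=0$, and then passed to the limit via \eqref{U4}--\eqref{U5}---combined with \eqref{11} yields
\begin{align*}
\int_\cI\chi_{\sqrt{\e}}(f_\e(T))\,dx=\int_{Q_T}H_f^\e\,\chi_{\sqrt{\e}}''(f_\e)\,\p_xf_\e\,dx\,dt,
\end{align*}
with $H_f^\e$ as in \eqref{L3}. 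The key factorization is
\begin{align*}
H_f^\e=a_\e^{3/2}(f_\e)A_\e,\qquad A_\e:=a_\e^{1/2}(f_\e)\Bigl(a_\e(f_\e)\p_x^3f_\e+\tfrac{R}{2}\bigl(2a_\e(f_\e)+3a_\e(g_\e)\bigr)\p_x^3(f_\e+g_\e)\Bigr),
\end{align*}
which exhibits $A_\e$ as uniformly bounded in $L_2(Q_T)$ by the energy inequality \eqref{13}.

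The crucial observation is that on $\supp\chi_{\sqrt{\e}}''(f_\e)$ one has $f_\e\in[-\sqrt{\e},0]$, so $a_\e(f_\e)=\e$ identically; the bound $\|\chi_{\sqrt{\e}}''\|_{L_\infty}\leq\e^{-1/2}\|\varphi\|_{L_\infty}$ from \eqref{M2} then yields $|a_\e^{3/2}(f_\e)\chi_{\sqrt{\e}}''(f_\e)|\leq C\e$ pointwise. Cauchy--Schwarz combined with the uniform estimate $\|\p_xf_\e\|_{L_2(Q_T)}\leq\sqrt{T}\|\p_xf_\e\|_{L_\infty(0,T;L_2)}\leq C\sqrt{T}$ (coming from \eqref{13}) delivers the first inequality of \eqref{Neg}.

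The argument for $g_\e$ proceeds identically using the decomposition \eqref{HHH} of $H_g^\e$ into three summands with prefactors $a_\e^{3/2}(g_\e)$, $a_\e^{1/2}(f_\e)a_\e(g_\e)$, and $a_\e^{1/2}(f_\e)a_\e(g_\e)$, each multiplying an $L_2(Q_T)$-bounded quantity. On $\supp\chi_{\sqrt{\e}}''(g_\e)$ one has $a_\e(g_\e)=\e$, so the first prefactor is controlled by $C\e$ but the other two only by $C\e^{1/2}$ (using boundedness of $a_\e^{1/2}(f_\e)$ from \eqref{U1}). The $\e^{1/2}$ contributions dominate, and Cauchy--Schwarz combined with $\|\p_xg_\e\|_{L_2(Q_T)}\leq C\sqrt{T}$ yields the bound $C\sqrt{T\e}$. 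The principal obstacle is precisely this asymmetric bookkeeping: the structure $H_f^\e=a_\e^{3/2}(f_\e)A_\e$ provides a full factor $a_\e^{3/2}(f_\e)$ (giving rate $\e$), whereas two of the three summands of $H_g^\e$ carry only one factor of $a_\e(g_\e)$ (giving the weaker rate $\sqrt{\e}$); correctly identifying the surviving power of $\e$ in each summand is the entire point of the estimate.
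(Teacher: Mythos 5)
Your proposal is correct and follows essentially the same route as the paper: derive the identity $\int_\cI\chi_{\sqrt{\e}}(f_\e(T))\,dx=\int_{Q_T}H_f^\e\,\chi_{\sqrt{\e}}''(f_\e)\,\p_xf_\e\,dxdt$ at the Galerkin level and pass to the limit, then use the factorization $H_f^\e=a_\e^{3/2}(f_\e)A_\e$ (respectively the three-term decomposition \eqref{HHH} for $H_g^\e$), the fact that $a_\e\equiv\e$ on $\supp\chi_{\sqrt{\e}}''$, the bound $\|\chi_{\sqrt{\e}}''\|_{L_\infty}\leq\e^{-1/2}\|\varphi\|_{L_\infty}$, and Cauchy--Schwarz against the energy-controlled $L_2(Q_T)$ quantities. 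Your bookkeeping of the asymmetric rates ($\e$ for $f_\e$ versus $\sqrt{\e}$ for $g_\e$) matches the paper's computation exactly.
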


Before proving Lemma \ref{L:1}  let us draw the conclusion that all
accumulation points of  the family $((f_\e,g_\e))_{\e\in(0,1]}$ in $C(\ov Q_T,\R^2),$ with $T>0$,
are non-negative functions.
\begin{cor}\label{C:1}
 Assume that  there exists a  sequence $(\e_k)_{k}\subset(0,1]$ with $\e_k\searrow0$
and a pair  $(f,g)\in C(\ov Q_T,\R^2)$ such that 
\begin{equation}\label{Ass}
(f_{\e_k},g_{\e_k})\to (f,g)\quad\text{in $C(\ov Q_T,\R^2)$.}
\end{equation} 
Then, $f$ and $g$ are both non-negative functions in $Q_T$.
\end{cor}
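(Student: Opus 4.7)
The plan is to combine Lemma \ref{L:1} with the observation that the mollified function $\chi_\delta$ is an explicit approximation of the negative-part map $s\mapsto s^-:=\max\{-s,0\}$. Indeed, \eqref{M1} says exactly that $\|\chi_\delta-(\cdot)^-\|_{L_\infty(\R)}\leq\delta$, and since $s\mapsto s^-$ is 1-Lipschitz, the triangle inequality gives
\[
\|\chi_{\sqrt{\e_k}}(f_{\e_k})-f^-\|_{C(\ov Q_T)}\leq \sqrt{\e_k}+\|f_{\e_k}-f\|_{C(\ov Q_T)},
\]
together with the analogous bound for $g_{\e_k}$ and $g$. By the hypothesis \eqref{Ass}, the right-hand sides tend to zero as $k\to\infty$.

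Next, I would fix $t\in[0,T]$ and integrate in $x$ over $\cI$. On the one hand, the uniform convergence just established, combined with the finiteness of the Lebesgue measure of $\cI$, yields
\[
\int_\cI\chi_{\sqrt{\e_k}}(f_{\e_k}(t,x))\,dx\;\longrightarrow\;\int_\cI f^-(t,x)\,dx,\qquad \int_\cI\chi_{\sqrt{\e_k}}(g_{\e_k}(t,x))\,dx\;\longrightarrow\;\int_\cI g^-(t,x)\,dx
\]
as $k\to\infty$. On the other hand, Lemma \ref{L:1} bounds these same integrals in absolute value by $C\sqrt{T}\,\e_k$ and $C\sqrt{T\e_k}$ respectively, and both upper bounds vanish in the limit.

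Comparing the two conclusions forces $\int_\cI f^-(t,x)\,dx=\int_\cI g^-(t,x)\,dx=0$ for every $t\in[0,T]$. Since $f^-$ and $g^-$ are non-negative and continuous on $\ov Q_T$ (by continuity of the limit pair $(f,g)$), each must vanish identically on $\ov Q_T$, which is exactly the claim $f\geq 0$ and $g\geq 0$. I do not expect any genuine obstacle in this argument: the substantive content is packed into Lemma \ref{L:1}, and the only care required here is to quantify how $\chi_\delta$ approximates $(\cdot)^-$ as $\delta=\sqrt{\e_k}\to 0$, which is precisely what property \eqref{M1} is designed to track; the interchange of limit and integration is harmless because $\cI$ is bounded and the convergence is uniform on $\ov Q_T$.
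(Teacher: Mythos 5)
Your argument is correct and follows essentially the same route as the paper: approximate the negative part $\max\{-\cdot,0\}$ uniformly by $\chi_{\sqrt{\e_k}}$, pass to the limit in the integral bounds of Lemma \ref{L:1}, and conclude $\int_\cI \max\{-f(t),0\}\,dx=0$ for each $t$, hence $f\geq 0$ (and likewise for $g$) by continuity. The only cosmetic difference is the intermediate point in the triangle inequality (you use the $1$-Lipschitz continuity of $s\mapsto\max\{-s,0\}$ via \eqref{M1}, the paper uses that of $\chi_{\sqrt{\e_k}}$ via \eqref{M2}), which is immaterial.
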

\begin{proof} In virtue of \eqref{M2}, we have 
 \begin{align*}
&\|\chi_{\sqrt{\e_k}}(f_{\e_k})-\max\{-f,0\}\|_{L_\infty(Q_T)}\\
&\leq\|\chi_{\sqrt{{\e_k}}}(f_{\e_k})-\chi_{\sqrt{\e_k}}(f)\|_{L_\infty(Q_T)}+\|\chi_{\sqrt{\e_k}}(f)-\max\{-f,0\}\|_{L_\infty(Q_T)}\\
&\leq\| f_{\e_k}-f\|_{L_\infty(Q_T)}+\|\chi_{\sqrt{\e_k}}(f)-\max\{-f,0\}\|_{L_\infty(Q_T)}.
\end{align*}
Whence, our assumption \eqref{Ass} guarantees the convergence 
$\chi_{\sqrt{\e_k}}(f_{\e_k})\to\max\{-f,0\}$ in $C(\ov Q_T).$ 
Letting now $k\to\infty$ in the first inequality of \eqref{Neg} yields
\[
\int_\cI\max\{-f(t),0\}\, dx=0
\]
for all $t\in[0,T].$ This is the desired assertion for $f$. 
The proof of the non-negativity   of $g$ follows similarly.
\end{proof}

\begin{proof}[Proof of Lemma \ref{L:1}] 
Let $\delta>0$ be given. 
Since $\chi_\delta'(f_\e^n(t))\in H^1(\cI) $, we compute that
\begin{align*}
 \frac{d}{dt}\int_\cI\chi_\delta(f_\e^n(t))\, dx=&\int_\cI\chi_\delta'(f_\e^n(t))\p_tf_\e^n(t)\, dx=\int_\cI\p_tf_\e^n(t)\sum_{k=0}^n(\chi_\delta'(f_\e^n(t))|\phi_k)_{L_2}\phi_k\, dx\\
=&\int_\cI H^{\e,n}_f(t)\sum_{k=0}^n\p_x\left((\chi_\delta'(f_\e^n(t))|\phi_k)_{L_2}\phi_k\right)\,dx,
\end{align*}
relation which is satisfied for all $t\geq0.$
The assertions \eqref{Neg} are obviously true when $T=0,$ so let us assume that $T>0.$
Integration the previous identities with respect to time on $[0,T]$ shows that  
\begin{align}
\int_\cI\chi_\delta(f_\e^n(T))\, dx=\int_\cI\chi_\delta(f_\e^n(0))\, dx+\int_{Q_T}H^{\e,n}_f\sum_{k=0}^n (\chi_\delta'(f_\e^n)|\phi_k)_{L_2}\p_x\phi_k \, dxdt.\label{gaucho}
\end{align}
In order to let $n\to\infty$ in \eqref{gaucho}, we first observe   
\begin{equation}\label{eq}
\sum_{k=0}^n(\chi_\delta'(f_\e^n )|\phi_k)_{L_2}\p_x\phi_k\to \chi_\delta''(f_\e)\p_xf_\e \qquad\text{in $L_2(Q_T).$}
\end{equation}
Indeed, we have
\begin{align*}
\chi_\delta''(f_\e)\p_xf_\e-\sum_{k=0}^n(\chi_\delta'(f_\e^n )|\phi_k)_{L_2}\p_x\phi_k =&\left(\chi_\delta''(f_\e)\p_xf_\e-\sum_{k=0}^n(\chi_\delta'(f_\e)|\phi_k)_{L_2}\p_x\phi_k\right)\\
&+\sum_{k=0}^n\left(\chi_\delta'(f_\e)-\chi_\delta'(f_\e^n)|\phi_k\right)_{L_2}\p_x\phi_k,
\end{align*}
and the convergence of the first term to zero follows by using Lebesgue's
dominated convergence theorem together with the fact that $\chi_\delta'(f_\e(t))\in H^1(\cI)$ for all $t\geq0$.
On the other hand, the  reminding sum is the  truncation of the Fourier series of 
$\chi_\delta''(f_\e^n)\p_xf_\e^n-\chi_\delta''(f_\e)\p_xf_\e$ and, using 
\eqref{M2}, may be estimated as follows 
\begin{align*}
 \left\|\sum_{k=0}^n(\chi_\delta'(f_\e^n)-\chi_\delta'(f_\e)|\phi_k)\p_x\phi_k\right\|^2_{L_2(Q_T)}
\leq&\|\chi_\delta''(f_\e^n)\p_xf_\e^n-\chi_\delta''(f_\e)\p_xf_\e\|^2_{L_2(Q_T)}\\
 \leq&2\|\chi_\delta''(f_\e^n)-\chi_\delta''(f_\e)\|_{L_\infty(Q_T)}^2\|\p_xf_\e^n\|^2_{L_2(Q_T)}\\
&+2\|\chi_\delta''(f_\e^n)\|_{L_\infty(Q_T)}
^2\|\p_xf_\e^n-\p_xf_\e\|^2_{L_2(Q_T)}\\
 \leq&2\delta^{-4}\|\varphi'\|_{L_\infty(\R)}^2\|f_\e^n-f_\e\|_{L_\infty(Q_T)}^2\|\p_xf_\e^n\|^2_{L_2(Q_T)}\\
&+2\delta^{-2}\|\varphi\|_{L_\infty(\R)}^2\|\p_xf_\e^n-\p_xf_\e\|^2_{L_2(Q_T)},
\end{align*}
the desired estimate \eqref{eq} being now a consequence of \eqref{U1} and \eqref{U4}.

Thus, letting $n\to\infty$ in \eqref{gaucho} and taking into account that
$f_\e(0)=f_0\geq0,$ we obtain the following identity for the weak solution of
\eqref{S1} found in  Theorem \ref{T:1}
\begin{align*}
 &\int_\cI\chi_\delta(f_\e(T))\, dx=\int_{Q_T}H_f^\e\chi_\delta''(f_\e )\p_xf_\e\, dxdt.
\end{align*}
Since $\chi_\delta''=0$ on $\R\setminus(-\delta,0),$ H\"older's inequality leads
us to 
\begin{align*}
 \left(\int_\cI\chi_\delta(f_\e(T))\, dx\right)^2\leq&\left(\int_{[-\delta\leq f_\e\leq 0]}\left| H_f^\e\right|\chi_\delta''(f_\e )|\p_xf_\e|\, dxdt\right)^2\\
\leq&\int_{[-\delta\leq f_\e\leq 0]}a_\e(f_\e)\left|a_\e(f_\e)\p_x^3f_\e+\frac{R}{2}(2a_\e(f_\e)+3a_\e(g_\e))\p_x^3(f_\e+g_\e)\right|^2 dxdt\\
&\times\int_{[-\delta\leq f_\e\leq 0]} a^3_\e(f_\e)\chi_\delta''(f_\e )^2|\p_xf_\e|^2\, dxdt.
\end{align*}
We choose now $\delta:=\sqrt{\e}.$ 
Recalling that $a_\e\equiv\e$ on $(-\infty,0]$,  the energy inequality \eqref{13} together with   \eqref{M2}  imply that 
\begin{align*}
 \left|\int_\cI\chi_{\sqrt{\e}}(f_\e(T))\, dx\right|&\leq C\left(\int_{[-\sqrt{\e}\leq f_\e\leq 0]}  \e^3\chi_{\sqrt{\e}}''(f_\e)^2|\p_xf_\e|^2\, dxdt\right)^{1/2}\\
&\leq C\e\|\varphi\|_{L_\infty(\R)}\left(\int_{Q_T}|\p_xf_\e|^2\,dxdt\right)^{1/2}\leq C\sqrt{T}\e,
\end{align*}
which is the desired estimate \eqref{Neg} for $f_\e.$ 
Concerning the second estimate of \eqref{Neg}, similar arguments to those presented above yield that 
\begin{align*}
 &\int_\cI\chi_\delta(g_\e(T))\, dx=\int_{Q_T} H_g^\e \chi_\delta''(g_\e )\p_xg_\e\, dxdt
\end{align*}
for all $T>0$ and $\delta>0$. 
Writing $H_g^\e$  as the sum of three terms, cf. \eqref{HHH}, we obtain from
H\"older's inequality and the estimate \eqref{13} the following inequalities
\begin{align*}
 &\left|\int_\cI\chi_\delta(g_\e(T))\, dx\right|\\
&\leq\mu R\int_{[-\delta\leq g_\e\leq0]} a^{3/2}_\e(g_\e)\chi_\delta''(g_\e)|\p_xg_\e|\left|a^{3/2}_\e(g_\e)\p_x^3(f_\e+g_\e)\right| \, dxdt\\
&\phantom{\leq\,}+\frac{3R}{4}\int_{[-\delta\leq g_\e\leq0]}a_\e^{1/2}(f_\e^n)a_\e(g_\e^n) \chi_\delta''(g_\e)|\p_xg_\e|\left|a_\e^{1/2}(f_\e^n)a_\e(g^n_\e)\p_x^3(f^n_\e+g^n_\e)\right|\,dxdt\\
&\phantom{\leq\,}+\frac{3}{2}\int_{[-\delta\leq g_\e\leq0]} a_\e^{1/2}(f_\e^n)a_\e(g_\e^n) \chi_\delta''(g_\e )|\p_xg_\e|\\
&\phantom{\leq\,}\hspace{2.5cm}\times\left|a_\e^{1/2}(f_\e^n)\left(a_\e(f_\e^n)\p_x^3f_\e^n+\frac{R}{2}\left(2a_\e(f_\e^n)+3a_\e(g_\e^n)\right)\p_x^3(f_\e^n+g_\e^n)\right)\right|\,dxdt\\
&\leq C\left(\int_{[-\delta\leq g_\e\leq0]}a^2_\e(g_\e)(a_\e(f_\e)+a_\e(g_\e))\chi_\delta''(g_\e )^2|\p_xg_\e|^2\, dxdt\right)^{1/2},
\end{align*}
and, when $\delta=\sqrt{\e},$ we arrive at the following estimate 
\begin{align*}
 \left|\int_\cI\chi_{\sqrt{\e}}(g_\e(T))\, dx\right|&\leq C \sqrt{\e}\|\varphi\|_{L_\infty(\R)}\left(\int_{Q_T}|\p_xg_\e|^2 \, dxdt\right)^{1/2}\leq C\sqrt{T\e}.
\end{align*}
This proves the lemma.
 \end{proof}

\section{Existence of weak solutions for  the original problem}\label{S:3}

This last section is devoted to the proof of our main result Theorem \ref{T:M}.
Therefore, we collect first some estimates for the family of weak solutions $((f_\e,g_\e))_{\e\in(0,1]} $ of the approximating problems \eqref{S1}, \eqref{eq:bc1}, and \eqref{eq:bc2}. 
Considering now    $\e\in(0,1]$ as a parameter,   we deduce from  \eqref{11}, \eqref{12}, and \eqref{13} the uniform boundedness  of 
 \begin{align}\label{Fi1}
& \p_xf_\e,\  \p_xg_\e \quad \text{  in $L_\infty(0,T;L_2(\cI))$,}\\
\label{Fi2}
& \p_tf_\e,\  \p_tg_\e\quad \text{ in $L_2(0,T;\left(H^1(\cI)\right)')$,}\\
\label{Fi3}
& a_\e^{3/2}(g_\e)\p_x^3(f_\e+g_\e),\, a_\e^{1/2}(f_\e)a_\e(g_\e)\p_x^3(f_\e+g_\e)\quad \text{in $L_2(Q_T)$,}\\
\label{Fi4}
&a_\e^{1/2}(f_\e)\left(a_\e(f_\e)\p_x^3f_\e+\frac{R}{2}(2a_\e(f_\e)+3a_\e(g_\e))\p_x^3(f_\e+g_\e)\right)\quad \text{in $L_2(Q_T)$}
 \end{align}
for all $T>0$.
Recalling also \eqref{12}, the   arguments used in the previous section  ensure the existence of a sequence $(\e_k)_{k}\subset(0,1]$ with $\e_k\searrow0$ and functions 
\[
f,g\in L_2(0,T;H^1(\cI))\cap \left(\cap_{\alpha\in (0,1/2] }C([0,T], C^\alpha(\ov\cI))\right),
\]
having the property that 
\begin{align}\label{F1}
 &f_{\e_k}\to f,\quad g_{\e_k} \to g\qquad\text{in $C([0,T], C^\alpha(\ov\cI))$ for all $\alpha\in[0,1/2);$}\\
&f_{\e_k}\rightharpoonup f,\quad g_{\e_k} \rightharpoonup g\qquad\text{in $L_2(0,T;H^1(\cI)).$}\label{F2}\\
\end{align}
Particularly, \eqref{F1} implies that for almost  every $t\in[0,T]$ we have
\begin{equation}\label{point}
\text{ $\p_xf_{\e_k}(t)\rightharpoonup \p_xf(t)$, \quad  $\p_xg_{\e_k}(t) \rightharpoonup \p_xg(t)$  \qquad in $L_2(\cI)$,}                        
\end{equation}
and it follows now directly from  \eqref{13}  that $f,g\in L_\infty(0,T; H^1(\cI)).$ 
Moreover, the convergence \eqref{F1} together with the   Corollary \ref{C:1} yield the non-negativity of the limits $f$ and $g.$
The latter property combined with the relation \eqref{12} ensure the desired mass conservation property claimed by Theorem \ref{T:M} $(c)$.
Let us also observe that the claim $(b)$ of
Theorem \ref{T:M} is a simple consequence of the convergence \eqref{F1} and of the relations $f_\e(0)=f_0$ and $g_\e(0)=0$ for all $\e\in(0,1]$. 

We next establish the identities $(d)$ of Theorem \ref{T:M}.   
For this  let  $\xi\in C^\infty(\ov Q_T)$ be given and,  for every
$\e\in(0,1],$ let $((f_\e^n,g_\e^n))_n$ be the sequence found in Section
\ref{S:2} to converge  towards the weak
solution $(f_\e,g_\e)$ of problem \eqref{S1}.
Integrating by parts, we then find that
\begin{align}
 \int_0^T\int_\cI\p_tf_\e^n \xi \, dxdt={ \int_\cI f_\e^n(T,x)  \xi(T,x )\, dx}-\int_\cI f_\e^n(0)  \xi(0,x )\, dx-\int_\cI\int_0^Tf_\e^n \p_t\xi \,dxdt.\label{parti}
\end{align}
Hence, letting $n\to\infty$ in \eqref{parti}, we deduce in virtue of \eqref{U4},
\eqref{U6}, and of the first identity in \eqref{11} that
\begin{align}
\int_{Q_T}H_f^\e\p_x\xi\, dx dt={ \int_\cI f_\e(T,x)  \xi(T,x )\, dx}-\int_\cI f_0(x)  \xi(0,x )\, dx-\int_{Q_T}f_\e \p_t\xi \,dxdt.\label{limita}
\end{align}
Particularly, recalling \eqref{F1},  it is easy to see
that along the subsequence $((f_{\e_k},g_{\e_k}))_k,$ the right hand side of the equality \eqref{limita} converges
towards the corresponding  quantities appearing in the equation \eqref{I1}.

In order to study the behaviour of the left hand side  of the relation \eqref{limita}, we
define for every $m\in\N$, $m\geq1,$ the open subsets
\begin{align*}
\cP_f^m&:=\{(t,x)\in(0,T)\times\cI\,:\, \text{$f(t,x)>1/m$}\},\\
\cP_g^m&:=\{(t,x)\in(0,T)\times\cI\,:\, \text{$g(t,x)>1/m$}\},
\end{align*}
of $Q_T$, and observe that $\cP_f=\cup_{m=1}^\infty \cP_f^m$ and $\cP_g=\cup_{m=1}^\infty\cP_g^m.$
Let now $m\geq1$ be fixed.
Due to \eqref{F1}, we may find a positive integer $k_0$ with the property that
$f_{\e_k}(t,x)>(2m)^{-1}$ and $g_{\e_k}(t,x)>(2m)^{-1}$ for all $(t,x)\in \cP_f^m\cap \cP_g^m$ and all $k\geq k_0.$
Thanks to \eqref{Fi3} and \eqref{Fi4},   the sequences $(\p_x^3 f_{\e_k})_k$ and $(\p_x ^3g_{\e_k})_k$ are both bounded in $L_2(\cP_f^m\cap \cP_g^m),$
and, up to the extraction of a diagonal subsequence, we may assume that
\begin{equation}\label{help}
\p_x^3f_{\e_k}\rightharpoonup \p_x^3f,\quad \p_x^3g_{\e_k} \rightharpoonup\p_x^3g\qquad\text{in $L_2(\cP_f^m\cap \cP_g^m) $ }
\end{equation}
for all $m\geq1.$
Because $(H^{\e_k}_f)_k$  and $(a_{\e_k}^{-3/2}(f_{\e_k})H^{\e_k}_f)_k$ are bounded in $L_2(Q_T), $ cf. \eqref{12}, \eqref{Fi1},  and \eqref{Fi4},
we can also presuppose that there exist  functions  $ H_f,  j_f\in L_2(Q_T)$ such that  
\begin{equation}\label{help1}
 H^{\e_k}_f\rightharpoonup  H_f,\quad \left(a_{\e_k}(f_{\e_k})\right)^{-3/2}
H^{\e_k}_f\rightharpoonup  j_f\qquad\text{in $L_2(Q_T)$}.
\end{equation}
Using the convergences \eqref{help} and \eqref{F1}, we may identify the weak limits in
\eqref{help1} in the set where  $f$ and $g$ are both positive
\begin{equation*}
\begin{array}{llll}
& H_f=f^3\p_x^3f+\frac{R}{2}\left(2f^3+3f^2g\right)\p_x^3(f+g)\\[1ex]
& j_f=f^{1/2}\left(f\p_x^3f+\frac{R}{2} (2f+3fg)\p_x^3(f+g)\right)
\end{array}
\qquad\text{in $L_2(\cP_f\cap \cP_g)$.}
\end{equation*}
Moreover, because of $|a_{\e_k}(f_{\e_k})-f|\leq \e_k+|f_{\e_k}-f|$ for all $k\geq0,$ we may identify $ H_f$ in the large set $\cP_g.$  
Indeed, by the dominated convergence theorem   $a_{\e_k}^{3/2}(f_{\e_k})\to f^{3/2}$ in $L_2(Q_T),$  which shows,
together with  \eqref{help1}, that
$ H_f=f^{3/2} j_f$ in $L_2(Q_T).$
Summarizing, we have shown that
\begin{equation}\label{EQ}
 H_f=\left(f^3\p_x^3f+\frac{R}{2}\left(2f^3+3f^2g\right)\p_x^3(f+g)\right)\mathbf{1}_{(0,\infty)}(f)\qquad\text{in $L_2(\cP_g)$,}
\end{equation}
and the desired assertion \eqref{I1} follows now  at once.
The   identity \eqref{I2} is obtained by using similar arguments.
Indeed, in this case it is possible to identify first the weak limit $ H_g$ of (a subsequence of) $(H_g^{\e_k})_k$  in $L_2(Q_T).$
On the other hand, because of \eqref{Fi3},  there exist  functions $j_g ,  j_{f,g}\in L_2(Q_T)$ such that
\begin{equation}\label{help2}
 a_{\e_k}^{3/2}(g_{\e_k})\p_x^3(f_{\e_k}+g_{\e_k})\rightharpoonup j_{g},\quad
a_{\e_k}^{1/2}(f_{\e_k})a_{\e_k}(g_{\e_k})\p_x^3(f_{\e_k}+g_{\e_k})\rightharpoonup   j_{f,g}\qquad\text{in $L_2(Q_T)$}.
\end{equation}
Again, due to \eqref{F1} and \eqref{help}, we identify $ j_{g}=g^{3/2}\p_x^3(f+g)$ and $ j_{f,g}=f^{1/2}g\p_x^3(f+g)$ in
$L_2(\cP_f\cap \cP_g)$.
Writing  $H_g^{\e_k}$ in a similar manner as in \eqref{HHH},  the dominated convergence theorem shows then   
\begin{equation*}
 H_g=\mu R g^{3/2} j_g +\frac{3R}{4}f^{1/2}g j_{f,g}+\frac{3}{2}f^{1/2}g j_f\qquad\text{in $L_2(Q_T)$,}
\end{equation*}
and therefore
\[
 H_g=\left(\frac{3}{2} f^2g\p_x^3f+\frac{R}{2}\left(2\mu g^3+3f^2g+6fg^2\right)\p_x^3(f+g)\right)\mathbf{1}_{(0,\infty)}(g)\qquad\text{in $L_2(\cP_f)$.}
\]
The assertion \eqref{I2} is now immediate. 

Finally,  we collect \eqref{point}, \eqref{help1}, and \eqref{help2}, and 
pass to $\liminf_{k\to\infty}$ in the energy inequality \eqref{13} to obtain    the desired claim $(e)$ of Theorem \ref{T:M}.

Because $T$ was chosen arbitrary, we may again pick a sequence $T_n\nearrow \infty$ and, extracting a diagonal sequence of $((f_{\e_k}, g_{\e_k}))_{k}$ 
we may assume that $f$ and $g$ are globally defined and the claims of Theorem \ref{T:M}
are true for all $T>0.$

\bibliographystyle{abbrv}
\bibliography{Lit}
\end{document}